\newtheorem{thm}{Theorem}[section]
\newtheorem{cor}[thm]{Corollary}
\theoremstyle{remark}
\theoremstyle{definition}
\newtheorem{defn}[thm]{Definition}
\newcommand{\cay}[2]{\mathrm{Cay}(#1,#2)}		
\newcommand{\dih}[1]{\mathrm{Dih}(#1)}			
\newcommand{\aut}[1]{\mathrm{Aut}(#1)}			
\def\Z{\mathbb{Z}}
\DeclarePairedDelimiter\abs{\lvert}{\rvert}
\DeclarePairedDelimiter\ang{\langle}{\rangle}
\DeclarePairedDelimiter{\set}{\lbrace}{\rbrace}
\title{Cayley Graphs on Non-Isomorphic Groups}
\author{Joy Morris}
\author{Adrian Skelton}
\address{Department of Mathematics and Computer Science\\
	University of Lethbridge\\
	Lethbridge, AB T1K 3M4\\
	Canada}
\email{joy.morris@uleth.ca}
\email{adrian.skelton@uleth.ca}
\begin{document}
	
\maketitle

\begin{abstract}
	A number of authors have studied the question of when a graph can be represented
	as a Cayley graph on more than one nonisomorphic group. In this paper we give conditions for when a Cayley graph on an abelian group can be represented as a Cayley graph on a generalized dihedral group, and conditions for when the converse is true.
\end{abstract}

\section{Introduction}

A Cayley (di)graph $\cay{G}{S}$ is a (di)graph whose vertices are the elements of a group $G$ and whose edges are determined by the connection set $S\subset G$ by the following rule: there is an arc from $u$ to $v$ if and only if $v=su$ for some $s\in S$. Usually, the identity $e\in G$ is omitted from $S$ to ensure the (di)graph does not have a loop at every vertex. Furthermore, if $S=S^{-1}$, then for every arc from $u$ to $v$, there is also an arc from $v$ to $u$. In this case, we may replace all pairs of arcs with undirected edges, and the resulting structure is a graph.

In this paper, we will only be working with Cayley graphs, and use standard notation. That is, graphs are represented by $\Gamma$, the vertex set of a graph $\Gamma$ is represented by $V(\Gamma)$ and we write $u\sim v$ to denote that $u$ is connected to $v$ by an edge. Additionally, the proofs presented here make use of the fact that a graph $\Gamma$ is Cayley on a group $G$ if and only if the automorphism group of $\Gamma$ contains a regular subgroup isomorphic to $G$.

It is of interest to researchers when a Cayley graph on a group $G$ is also a Cayley graph on some non-isomorphic group $H$. A paper by Joseph \cite{joseph} gives necessary and sufficient conditions for a Cayley digraph of prime-squared order to be isomorphic to a Cayley digraph of both groups of the same order. This result was extended, giving necessary and sufficient conditions for a Cayley digraph of the cyclic group of order $p^k$, where $p>1$ is prime, to be isomorphic to a Cayley graph on some other group of order $p^k$. The case of odd primes was handled in \cite{morris}, and the case where $p=2$, when both groups are abelian, was solved in \cite{kovacs}. In both of these cases, graphs that were Cayley on both groups were all lexicographic (``wreath") products of smaller graphs. Another paper \cite{dobson} subsequently gave a group theoretic version of the result from \cite{joseph}.

In \cite{smolcic}, Morris and Smol\v{c}i\'{c} categorized two families of graphs that are each Cayley on an abelian group and a non-abelian group. It was shown that all Cayley graphs of cyclic groups of even order (known as circulant graphs) are also Cayley on a dihedral group of the corresponding order. The second family of graphs are those that are Cayley on generalized dihedral groups; with certain restrictions to the connection set, it was shown that these graphs are also Cayley on an abelian group that contains a direct factor of order $2$.  The first result presented here builds on both of these. It directly generalizes the first result by showing that all Cayley graphs on any abelian group of even order are also Cayley on generalized dihedral groups of the same order. This can also be thought of as looking at the second result from the opposite perspective: starting from a Cayley graph on an abelian group of even order, when is it also a Cayley graph on a generalized dihedral group? Our second result considers when a Cayley graph on a generalized dihedral group is also Cayley on some abelian group of even order that does not necessarily contain a direct factor of order $2$.

\section{Theorems and Proofs}

We begin with a formal definition of a generalized dihedral group:

\begin{defn}\label{dih}
	Let $A$ be an abelian group. Define the group $\dih{A}=\ang{A,x}$ where $x^2=1$ and $ax=xa^{-1}$ for every $a\in A$.
\end{defn}

Note that if $A$ is cyclic then this is the regular dihedral group, and if $A$ is an elementary abelian $2$-group, then $\dih{A}$ is isomorphic to the elementary abelian $2$-group whose rank is one greater than the rank of $A$.

In \cite{smolcic}, the following theorem was stated and proven although it is well-known in the field:

\begin{thm}[\cite{smolcic}]\label{1}
	Let $A$ be a cyclic group of even order, and let $D$ be the dihedral group of the same order. Let $S\subseteq A$ be closed under inversion, and let $\Gamma=\cay{A}{S}$. Then $\Gamma$ is also a Cayley graph on $D$. 
\end{thm}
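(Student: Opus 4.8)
The plan is to use the stated criterion that $\Gamma$ is Cayley on $D$ if and only if $\aut{\Gamma}$ contains a regular subgroup isomorphic to $D$. Writing $A$ additively and identifying it with $\Z_{2m}$ (where $|A|=2m$), so that $D=\dih{\Z_m}$ has order $2m$, I would view $\Gamma$ as having vertex set $\Z_{2m}$ with $u\sim v$ exactly when $v-u\in S$. The task then reduces to exhibiting two automorphisms of $\Gamma$ that together generate a regular dihedral group of order $2m$.

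First I would record the automorphisms available to us. The translations $\rho_b\colon v\mapsto v+b$ for $b\in A$ are automorphisms (they form the right regular representation witnessing that $\Gamma$ is Cayley on $A$). The crucial extra symmetry is the reflection $\tau\colon v\mapsto 1-v$: since $\tau(v)-\tau(u)=-(v-u)$ and $S=S^{-1}=-S$, the map $\tau$ preserves adjacency and so lies in $\aut{\Gamma}$. I would then set $R=\rho_2$ and $T=\tau$ as the candidate generators, the point being that $R$ has order $m$ (the order of $2$ in $\Z_{2m}$) while $T$ has order $2$.

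It remains to check the dihedral relation and, more importantly, regularity. A direct computation gives $TRT=\rho_{-2}=R^{-1}$, so $\langle R,T\rangle$ is a homomorphic image of $\dih{\Z_m}$ and hence has order at most $2m$. For the reverse inequality I would argue that the subgroup is transitive: $R$ permutes the even residues in a single cycle and the odd residues in a single cycle, while $T$ interchanges these two orbits. A transitive group of order at most $2m$ acting on $2m$ points must have order exactly $2m$ and act regularly, so $\langle R,T\rangle\cong\dih{\Z_m}=D$ is the desired regular subgroup. The one place requiring care --- and the place where the even order of $A$ is genuinely used --- is precisely this regularity claim: the offset $1$ in $\tau$ is odd, which is what makes $\tau$ fixed-point-free and forces $T$ to swap the two orbits of $R$ rather than preserve them; were $|A|$ odd, the element $2$ would generate all of $A$ and this two-orbit structure would collapse. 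I expect verifying fixed-point-freeness and transitivity to be the main (though still elementary) obstacle, with the adjacency-preservation of $\tau$ and the relation $TRT=R^{-1}$ being routine computations.
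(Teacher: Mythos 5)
Your proposal is correct and is essentially the cyclic-group specialization of the approach the paper uses for its more general Theorem~\ref{2}: there the regular dihedral subgroup is generated by translations by the index-two subgroup $H$ together with $\beta(z)=z^{-1}g$ (inversion composed with translation by an element outside $H$), which in $\Z_{2m}$ is exactly your $R=\rho_2$ together with $\tau\colon v\mapsto 1-v$. The only cosmetic difference is that you pin down the group's order and regularity via the homomorphic-image-plus-transitivity count, whereas the paper checks the dihedral relations and transitivity directly; both verifications are routine and equivalent.
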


Our first theorem generalises Theorem~\ref{1} from cyclic groups to any abelian group (still of even order). More precisely, we show that every Cayley graph on a group $C_{2^k}\times A$  where $k \ge 1$ is also a Cayley graph on $\dih{C_{2^{k-1}}\times A}$. Like Theorem~\ref{1}, this result is not necessarily true for digraphs, so we take $S$ to be closed under inversion.

\begin{thm}\label{2}
	Let $G$ be a finite abelian group of even order, so $G=\ang{c}\times A$ for some $c$ of order $2^k$, $k\ge1$, and some $A<G$. Let $S\subseteq G$ be closed under inversion and let $\Gamma=\cay{G}{S}$. Let $H=\ang{c^2}\times A$. Then $\Gamma$ is a Cayley graph on the generalized dihedral group $\dih{H}$.
\end{thm}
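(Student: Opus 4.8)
The plan is to invoke the criterion recalled in the introduction: $\Gamma$ is a Cayley graph on $\dih{H}$ if and only if $\aut{\Gamma}$ contains a regular subgroup isomorphic to $\dih{H}$. Since $\abs{\dih{H}}=2\abs{H}=\abs{G}$, it suffices to exhibit one such subgroup explicitly. I would build it from the translations $\rho_g\colon u\mapsto ug$ for $g\in G$, which form the regular subgroup isomorphic to $G$ already witnessing $\Gamma=\cay{G}{S}$; write $\rho_H=\set{\rho_h:h\in H}$ for the index-$2$ subgroup coming from $H$. The task is to adjoin a single involution that plays the role of $x$ in $\dih{H}=\ang{H,x}$.

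The first key step is to observe that, because $G$ is abelian and $S$ is closed under inversion, the inversion map $\iota\colon u\mapsto u^{-1}$ is an automorphism of $\Gamma$: if $v=su$ with $s\in S$, then $v^{-1}=s^{-1}u^{-1}$ with $s^{-1}\in S$, so $u\sim v$ forces $u^{-1}\sim v^{-1}$. I would then set $\tau=\rho_c\circ\iota$, that is $\tau\colon u\mapsto u^{-1}c$, which is again an automorphism of $\Gamma$ as a composite of two automorphisms.

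Next I would verify the defining relations of $\dih{H}$ for $\rho_H$ together with $\tau$. A short computation using commutativity gives $\tau^2=\mathrm{id}$ and $\tau\rho_h\tau=\rho_h^{-1}$ for every $h\in H$, so $\tau$ normalizes $\rho_H$ and the assignment $h\mapsto\rho_h$, $x\mapsto\tau$ extends to a surjective homomorphism $\dih{H}\to\ang{\rho_H,\tau}$. To see that this subgroup has full order $2\abs{H}$ (so that the homomorphism is an isomorphism) and is regular, I would pass to the quotient $G/H\cong\Z_2$, whose two cosets are $H$ and $cH$: each $\rho_h$ with $h\in H$ fixes both cosets setwise, whereas $\tau$ sends $uH$ to $u^{-1}cH=ucH$ and therefore interchanges the two cosets. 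Hence $\tau\notin\rho_H$, giving $\abs{\ang{\rho_H,\tau}}=2\abs{H}=\abs{G}$, and the group is transitive on $V(\Gamma)=G$ because $\rho_H$ is transitive on each coset while $\tau$ joins them; a transitive group of order $\abs{G}$ acting on $\abs{G}$ points is regular.

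I expect the main obstacle to be pinning down the correct extra involution rather than any hard computation. Pure inversion $\iota$ fixes the identity and so cannot generate a regular group together with $\rho_H$; the point is to twist it by a translation whose translating element lies \emph{outside} $H$ (taking $c$ is the simplest choice), which is exactly what forces $\tau$ to interchange the two cosets of $H$ and thereby makes $\ang{\rho_H,\tau}$ transitive, while the abelian hypothesis simultaneously secures $\tau^2=\mathrm{id}$ and the dihedral inversion relation. Once the coset-swapping and the order count are in place, the identification $\ang{\rho_H,\tau}\cong\dih{H}$ is immediate, completing the argument.
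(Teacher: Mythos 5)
Your proposal is correct and takes essentially the same route as the paper: your $\tau\colon u\mapsto u^{-1}c$ is precisely the paper's map $\beta(z)=z^{-1}g$ with $g=(c,e_A)$, adjoined to the right translations by $H$, with the same dihedral relations, automorphism checks, and order-plus-transitivity argument for regularity. The only cosmetic differences are that you factor $\tau$ as inversion composed with a translation and phrase transitivity via the two cosets of $H$, where the paper checks the parity of the exponent of $c$ in explicit coordinates.
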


\begin{proof}
	Given any $h\in H$, we define the map $\alpha_h$ by $\alpha_h(z)=zh$ for each $z\in V(\Gamma)$, and define $\beta$ on $V(\Gamma)$ by $\beta(z)=z^{-1}g$ where $g=(c,e_A)\in G$. We claim that $\ang{\alpha_h,\beta\,:\,h\in H}\cong\dih{H}$ is a regular subgroup of $\aut{\Gamma}$.
	
	First we show that $\ang{\alpha_h,\beta}\cong\dih{H}$. It is easy to see that $\ang{\alpha_h}\cong H.$ Furthermore, since $G$ is abelian, we have that $$\beta^2(z)=\beta(z^{-1}g)=(z^{-1}g)^{-1}g=g^{-1}zg=z$$ so $\beta$ has order 2. Finally, again since $G$ is abelian, we have that $$\beta^{-1}\alpha_h\beta(z)=\beta\alpha_h(z^{-1}g)=\beta(z^{-1}gh)=(z^{-1}gh)^{-1}g=h^{-1}g^{-1}zg=zh^{-1}=\alpha_h^{-1}(z)$$ so $\beta$ inverts each $\alpha_h$. Thus $\ang{\alpha_h,\beta}\cong\dih{H}$ as desired.
	
	Next we show that $\ang{\alpha_h,\beta}$ is a regular subgroup of $\aut{\Gamma}$. Let $u\sim v$ be adjacent vertices of $\Gamma$, so that $su=v$ for some $s\in S$. Then for any $h\in H$ we have that 
	$$s\alpha_h(u)=suh=vh=\alpha_h(v).$$ 
	Since $s\in S$, this means that $\alpha_h(u)\sim\alpha_h(v)$, so $\alpha_h$ is an automorphism for each $h\in H$. Furthermore, since $G$ is abelian, we have that $$s^{-1}\beta(u)=s^{-1}u^{-1}g=(us)^{-1}g=v^{-1}g=\beta(v).$$ Since $s^{-1}\in S$, this means that $\beta(u)\sim\beta(v)$, so $\beta$ is an automorphism, and $\ang{\alpha_h,\beta}$ is a subgroup of $\aut{\Gamma}$. 
	
	In order to show that it acts regularly on $V(\Gamma)$, it suffices to show it acts transitively on $V(\Gamma)$, since $\abs{\ang{\alpha_h,\beta}}=\abs{G}$. Consider the arbitrary vertices $u=(c^i,a)$ and $v=(c^j,b)$. If $i$ and $j$ have the same parity, then $j=i+2m$ for some $m\in\Z$. Then since $G$ is abelian, and with $h=(c^{2m},a^{-1}b)$, we have that $$v=(c^j,b)=(c^{i+2m},b)=(c^i,a)(c^{2m},a^{-1}b)=uh=\alpha_h(u).$$
	If $i$ and $j$ have different parity then $1-i$ and $j$ have the same parity. Then by the previous argument, there exists some $h\in H$ such that
	$$v=(c^j,b)=\alpha_h((c^{1-i},a^{-1}))=\alpha_h((c^i,a)^{-1}(c,e))=\alpha_h(u^{-1}g)=\alpha_h\beta(u).$$ Thus $\ang{\alpha_h,\beta}\cong \dih{H}$ is a group of automorphisms that acts regularly on the vertices of $\Gamma$ and so $\Gamma$ is a Cayley graph on $\dih{H}$.
\end{proof}

The requirement that the abelian group we start with has even order is essential.  There is no analogous result even for groups of order $p^k$ where $p$ is an odd prime. This was confirmed via computer, where a Cayley graph on $C_9\times C_3$ was found whose automorphism group of contains only one regular subgroup. The connection set for this specific graph was $S=\set{(a,e_{C_3}), (e_{C_9},b), (a^{-1},e_{C_3}), (e_{C_9},b^{-1})}$, where $a$ generates $C_9$ and $b$ generates $C_3$.

In the context of Theorem~\ref{2}, if we consider abelian groups of even order that have multiple cyclic groups whose orders are distinct power of 2 as direct factors,
we notice that the result can be applied on each of these direct factors. In this situation, the theorem tells us immediately that a Cayley graph on the abelian group is also Cayley on more than one generalized dihedral group.

\begin{cor}\label{cor}
	Let $A$ be any abelian group of odd order, and let $\Gamma$ be a Cayley graph on the abelian group $A\times (C_{2^{n_1}})^{m_1}\times\cdots\times (C_{2^{n_k}})^{m_k}$. Then $\Gamma$ is also Cayley on $$\dih{A\times C_{2^{n_1-1}}\times(C_{2^{n_1}})^{m_1-1}\times\cdots\times (C_{2^{n_k}})^{m_k}}, \ldots,$$ $$\dih{A\times (C_{2^{n_1}})^{m_1}\times\cdots\times C_{2^{n_k-1}}\times(C_{2^{n_k}})^{m_k-1}}.$$
	
	In general,	if a group $G$ can be written as a direct product of an abelian group of odd order and multiple cyclic groups of order some power of $2$, then the Cayley graph of $G$ is also a Cayley graph on $k$ generalized dihedral groups, where $k$ is the number of distinct powers of $2$ that show up in the orders of the cyclic groups.
\end{cor}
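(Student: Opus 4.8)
The plan is to obtain each of the $k$ listed generalized dihedral groups by a single application of Theorem~\ref{2}, re-viewing the one abelian group $G=A\times (C_{2^{n_1}})^{m_1}\times\cdots\times (C_{2^{n_k}})^{m_k}$ as a product $\ang{c}\times(\text{everything else})$ in $k$ different ways, once for each distinct $2$-power $2^{n_i}$. Throughout, the graph $\Gamma$ and its connection set $S$ never change, and $S=S^{-1}$ automatically since $\Gamma$ is an (undirected) graph; so the hypotheses of Theorem~\ref{2} are met at every step, and only the choice of distinguished cyclic factor varies.

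Concretely, I would fix $i\in\set{1,\dots,k}$ and let $c$ be a generator of one of the $m_i$ copies of $C_{2^{n_i}}$, so that $c$ has order $2^{n_i}$. Grouping all remaining direct factors into a subgroup $A_i$ gives $G=\ang{c}\times A_i$, which is exactly the set-up of Theorem~\ref{2} (with the theorem's ``$k$'' equal to $n_i$ and its ``$A$'' equal to $A_i$). The theorem then yields that $\Gamma$ is a Cayley graph on $\dih{\ang{c^2}\times A_i}$. Since $\ang{c^2}\cong C_{2^{n_i-1}}$ and $A_i=A\times(C_{2^{n_1}})^{m_1}\times\cdots\times(C_{2^{n_i}})^{m_i-1}\times\cdots\times(C_{2^{n_k}})^{m_k}$, this group is precisely the $i$-th entry in the displayed list. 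Letting $i$ run over $1,\dots,k$ produces all $k$ groups and settles the existence part; the degenerate case $n_i=1$ causes no trouble, as then $\ang{c^2}$ is trivial and the factor $C_{2^{n_i-1}}=C_{2^0}$ simply disappears.

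It remains to verify that the $k$ groups are genuinely distinct, which is the one point requiring a little care. I would first argue that the abelian parts $H_i:=\ang{c^2}\times A_i$ are pairwise non-isomorphic by comparing $2$-primary elementary divisors: forming $H_i$ from $G$ decreases the multiplicity of the elementary divisor $2^{n_i}$ by exactly one, whereas forming $H_j$ (for $j\ne i$) leaves that multiplicity at $m_i$ or raises it to $m_i+1$ (the latter precisely when $n_j=n_i+1$); since $m_i-1$ differs from both, the elementary-divisor multisets disagree and $H_i\not\cong H_j$. To upgrade this to $\dih{H_i}\not\cong\dih{H_j}$ I would use that $H$ is recoverable from $\dih{H}$ for any finite abelian $H$: every element of $\dih{H}$ lying outside $H$ squares to the identity, so for each $d>2$ the number of elements of order $d$ in $\dih{H}$ equals that in $H$, while for $d=2$ the two counts differ by the constant $\abs{H}=\abs{\dih{H}}/2$. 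These order statistics therefore determine the number of elements of each order in $H$, and a finite abelian group is determined up to isomorphism by that data. Hence the map $H\mapsto\dih{H}$ is injective on isomorphism classes of finite abelian groups, and the $k$ generalized dihedral groups are pairwise distinct.

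I expect the iteration itself to be immediate from Theorem~\ref{2}, so the only genuine work is the injectivity of the construction $H\mapsto\dih{H}$. The order-statistics argument above looks like the cleanest route, and it has the advantage of handling uniformly the borderline situation in which some $H_i$ is an elementary abelian $2$-group (so that $\dih{H_i}$ is again elementary abelian and a naive ``elements of order $>2$'' criterion would be vacuous). With that fact in hand the proof reduces to the bookkeeping in the second paragraph together with the elementary-divisor comparison in the third.
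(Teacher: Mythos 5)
Your proposal is correct and takes essentially the same route as the paper: the paper obtains each listed group by exactly your single application of Theorem~\ref{2}, choosing the distinguished factor $\ang{c}$ to be one copy of $C_{2^{n_i}}$ for each distinct exponent $n_i$ and absorbing the remaining factors into the theorem's ``$A$''. The paper treats the corollary as immediate and never verifies that the $k$ resulting generalized dihedral groups are pairwise non-isomorphic; your elementary-divisor comparison plus the order-statistics proof that $H\mapsto\dih{H}$ is injective on isomorphism classes is correct additional rigor on a point the paper leaves implicit.
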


For an example of Corollary~\ref{cor}, if $\Gamma$ is a Cayley graph on the group $C_4\times C_2$ it is also Cayley on the group $\dih{C_2\times C_2}=C_2\times C_2\times C_2$ as well as the group $\dih{C_4}=D_4$.
In general, if $\Gamma$ is Cayley on $C_4\times (C_2)^k$, then it is also Cayley on the elementary abelian $2$-group $(C_2)^{k+2}$.

For a bigger example of Corollary~\ref{cor} in action, take $\Gamma$ to be a Cayley graph on the group $C_8\times C_4\times C_2\times A$ where $A$ is abelian. Then $\Gamma$ is also Cayley on $\dih{C_4\times C_4\times C_2\times A}$, $\dih{C_8\times C_2\times C_2\times A}$ and $\dih{C_8\times C_4\times A}$.

The other main theorem in~\cite{smolcic} states:
\begin{thm}[\cite{smolcic}]\label{JS}
	Let $A$ be an abelian group, and let $D=\dih{A}$ be the corresponding generalized dihedral group. Let $S\subseteq D$ be closed under inversion, and let $\Gamma=\cay{D}{S}$. 
	
	Suppose there is some $y\in xA$ such that for every $a\in A$ we have $ya\in S\cap xA$ if and only if $ya^{-1}\in S\cap xA$. Then $\Gamma$ is also a Cayley graph on the abelian group $A\times C_2$. 
\end{thm}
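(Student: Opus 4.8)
The plan is to exhibit an explicit regular subgroup of $\aut{\Gamma}$ isomorphic to $A\times C_2$, mirroring the style of the proof of Theorem~\ref{2}. The vertex set is $D=A\cup xA$, and I would first record two structural facts: every element of the coset $xA$ is an involution, and since $S=S^{-1}$ the set $S\cap A$ is automatically closed under inversion. My candidate subgroup consists of the right translations $\rho_a\colon z\mapsto za$ for $a\in A$, together with the single left translation $\tau\colon z\mapsto yz$, where $y$ is the reflection supplied by the hypothesis. The maps $\rho_a$ are automorphisms of any Cayley graph and form a copy of $A$; since left and right multiplication always commute, and $y^2=1$ gives $\tau^2=\mathrm{id}$, the group $\ang{\rho_a,\tau\,:\,a\in A}$ will be isomorphic to $A\times C_2$ as soon as I know that $\tau\in\aut{\Gamma}$ and that $\tau$ is not a right translation by an element of $A$ — the latter being clear since $\tau$ interchanges the cosets $A$ and $xA$.

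Next I would dispatch regularity and reduce the whole problem to a single invariance condition. Regularity is cheap once $\tau$ is an automorphism: the subgroup has order $2\abs{A}=\abs{D}=\abs{V(\Gamma)}$, the maps $\rho_a$ act regularly on each of $A$ and $xA$, and $\tau$ carries one coset onto the other, so the action is transitive and hence regular. The real content is showing that $\tau=L_y$ preserves adjacency. Since $L_y(v)L_y(u)^{-1}=y(vu^{-1})y^{-1}$, the map $L_y$ is an automorphism exactly when $S$ is closed under conjugation by $y$, so the theorem reduces entirely to verifying this conjugation-invariance.

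Finally I would split $S=(S\cap A)\cup(S\cap xA)$ and check invariance on each piece; this is the step I expect to carry the weight. Conjugation by the reflection $y$ inverts $A$, that is $yay^{-1}=a^{-1}$ for $a\in A$, so invariance of $S\cap A$ is immediate from $S=S^{-1}$. For the reflections, writing a general element of $xA$ as $ya$ — legitimate because $xA=yA$ — a short computation using $y^2=1$ and $ay=ya^{-1}$ yields $y(ya)y^{-1}=ya^{-1}$. Hence invariance of $S\cap xA$ under conjugation by $y$ says precisely that $ya\in S\cap xA$ if and only if $ya^{-1}\in S\cap xA$, which is exactly the hypothesis. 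With both pieces handled, $\tau$ is an automorphism, the subgroup $\ang{\rho_a,\tau\,:\,a\in A}\cong A\times C_2$ acts regularly on $V(\Gamma)$, and $\Gamma$ is therefore a Cayley graph on $A\times C_2$.
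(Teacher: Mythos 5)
Your proof is correct and takes essentially the same approach as the paper: the paper obtains this statement as the $k=0$ case of its Theorem~\ref{3}, whose regular subgroup is exactly yours — the right translations by $A$ together with $z\mapsto yz$ (the paper's piecewise map $\beta$ collapses to your $\tau$ when $c=e$) — and regularity is argued identically via transitivity on the two cosets. The only organizational difference is that you verify $\tau\in\aut{\Gamma}$ by checking the conjugation invariance $ySy^{-1}=S$ separately on $S\cap A$ (using $S=S^{-1}$) and on $S\cap xA$ (using the hypothesis), whereas the paper checks adjacency preservation directly in four coset cases; the computations are equivalent.
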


Our next theorem is a generalization of  Theorem~\ref{JS}. It gives a condition for when a Cayley graph on a generalized dihedral group is also Cayley on an abelian group, though rather than moving from $\dih{A}$ to $C_2\times A$, here we move from $\dih{C_{2^k}\times A}$ to $C_{2^{k+1}}\times A$. Taking the special case $k=0$ in Theorem~\ref{3} gives Theorem~\ref{JS}.

\begin{thm}\label{3}
	Let $G$ be a finite abelian group and let $D=\dih{G}$. Let $S \subseteq D$ be closed under inversion, and $\Gamma=\cay{D}{S}$. 
	Suppose that there exists some $c\in G$ such that $c^{2^k}=e$ and $G$ can be written as the internal direct product $G=\ang{c}\times A$. 
	
	If there exists some $y \in xG$ such that for every $g \in G$ we have $yg\in S\cap xG$ if and only if $yg^{-1}c\in S\cap xG$, then $\Gamma$ is a Cayley graph on the abelian group $C_{2^{k+1}}\times A$.
\end{thm}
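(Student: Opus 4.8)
The plan is to mirror the strategy of Theorem~\ref{2}: rather than construct the abelian group abstractly, I would exhibit an explicit regular subgroup of $\aut{\Gamma}$ isomorphic to $C_{2^{k+1}}\times A$, invoking the stated criterion that $\Gamma$ is Cayley on a group $K$ exactly when $\aut{\Gamma}$ contains a regular copy of $K$. The copy of $A$ comes for free: right multiplication $\rho_a\colon z\mapsto za$ by an element $a\in A$ is always an automorphism of a Cayley graph on $D$, and $\{\rho_a:a\in A\}\cong A$. The real work is to produce a single automorphism $\sigma$ of order $2^{k+1}$ that commutes with every $\rho_a$ and whose square is right multiplication by $c^{-1}$, so that $\ang{\sigma,\rho_a:a\in A}$ is abelian of order $2^{k+1}\abs{A}=\abs{D}$.

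Since every element of $xG$ is an involution and conjugation by the fixed element $y\in xG$ inverts $G$, I would define $\sigma$ piecewise so that it interchanges the two cosets $G$ and $xG$:
$$\sigma(z)=yz\ \text{ for }z\in G,\qquad \sigma(z)=yzc^{-1}\ \text{ for }z\in xG.$$
A direct computation using $y^2=e$ gives $\sigma^2=\rho_{c^{-1}}$, which has order $2^k$, so $\sigma$ has order $2^{k+1}$; checking $\sigma\rho_a=\rho_a\sigma$ is a one-line calculation using only commutativity of $G$. Because the even powers of $\sigma$ are precisely the maps $\rho_{c^{-i}}$ while its odd powers interchange $G$ and $xG$, the group $\ang{\sigma}$ meets $\{\rho_a:a\in A\}$ trivially, yielding the abelian group $\ang{\sigma}\times\ang{\rho_a:a\in A}\cong C_{2^{k+1}}\times A$. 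Tracking the orbit of the identity shows this group is transitive on $V(\Gamma)$, hence regular since its order equals $\abs{V(\Gamma)}$.

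The main obstacle is verifying that $\sigma$ is genuinely an automorphism, and this is exactly where the hypothesis on $S$ is used. Given an edge $v=su$ with $s\in S$, I would compute the new connection element $s'=\sigma(v)\sigma(u)^{-1}$ and show $s'\in S$, splitting into four cases according to whether $u$ and $s$ lie in $G$ or in $xG$. When $s\in G$ the $c^{\pm1}$ factors cancel and the relation $ysy^{-1}=s^{-1}$ forces $s'=s^{-1}\in S$ immediately from $S=S^{-1}$. When $s\in xG$, writing $s=yg$ with $g\in G$ and using that $ucu^{-1}=c$ for $u\in G$ but $ucu^{-1}=c^{-1}$ for $u\in xG$, both remaining cases collapse to the single requirement $s'=yg^{-1}c\in S\cap xG$. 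This is precisely the content of the hypothesis that $yg\in S\cap xG$ if and only if $yg^{-1}c\in S\cap xG$, which also explains why the sign $c^{-1}$ in the definition of $\sigma$ is the correct choice. Since $\sigma$ is a bijection of the finite vertex set carrying edges to edges, it is an automorphism, which completes the construction of the regular subgroup and hence the proof.
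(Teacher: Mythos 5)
Your proposal is correct and is essentially the paper's own proof: your $\sigma$ coincides exactly with the paper's map $\beta$ (for $z\in xG$ one has $ycz = yzc^{-1}$ since conjugation by $z$ inverts $c$), your $\rho_a$ are the paper's $\alpha_a$, and both arguments establish the group structure, the four-case automorphism check hinging on $yg\in S \Leftrightarrow yg^{-1}c\in S$, and regularity via transitivity in the same way. If anything, your uniform formula $\sigma^2=\rho_{c^{-1}}$ on all of $D$ is slightly cleaner than the paper's coset-by-coset computation of $\beta^2$.
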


\begin{proof}
	For every $g\in G$, define the map $\alpha_g$ given by $\alpha_g(z)=zg$ for all $z\in V(\Gamma)=D$. Let $y$ be chosen to satisfy our assumptions, and define the map $\beta$ by $\beta(z)=yz$ if $z\in G$, and $\beta(z)=ycz$ if $z\in xG$. Let $H=\ang{\alpha_a,\beta\,:\,a\in A}.$ We claim that $H\cong C_{2^{k+1}}\times A$ is a regular subgroup of $\aut{\Gamma}$.
	
	First we show $H\cong C_{2^{k+1}}\times A$. It should be clear that $\ang{\alpha_a \,:\,a\in A}\cong A$. If $z\in G$ then since $G$ is abelian we have $$\beta^2(z)=\beta(yz)=ycyz=c^{-1}z=zc^{-1}=\alpha_{c^{-1}}(z).$$ 
	Furthermore, if $z\in xG$, then 
	$$\beta^2(z)=\beta(ycz)=yycz=cz=zc=\alpha_{c}(z).$$
	Since both $c$ and $c^{-1}$ have order $2^k$ as do $\alpha_{c}$ and $\alpha_{c^{-1}}$, it is straightforward to observe that $\beta^2$ has order $2^k$, and therefore $\beta$ has order $2^{k+1}$. Since $A\subset G$ is abelian, to show that $H$ is abelian, it suffices to show that $\beta$ commutes with $\alpha_a$ for each $a\in A$. If $z\in G$ then $za\in G$, and we have that
	$$\alpha_a\beta(z)=\alpha_a(yz)=yza=\beta(za)=\beta\alpha_a(z).$$
	If $z\in xG$ then $za\in xG$ and we have that
	$$\alpha_a\beta(z)=\alpha_a(ycz)=ycza=\beta(za)=\beta\alpha_a(z).$$
	So, $H\cong C_{2^{k+1}}\times A$ as desired.
	
	Next we show that $H$ is a subgroup of $\aut{\Gamma}$. Let $u\sim v$ be adjacent vertices, so that $su=v$ for some $s\in S$. It is easy to see that for any $a\in G$ we have $$s\alpha_a(u)=sua=va=\alpha_a(v),$$ 
	so $\alpha_a(u)\sim\alpha_a(v)$, thus each $\alpha_a$ is an automorphism of $\Gamma$. 
	
	Due to our definition of $\beta$, we will need to consider four cases depending on which coset $s$ and $u$ belong to. We will use the fact that $S$ is closed under inverses, and the fact that $g^{-1}y=yg$ for every $g\in G$, which follows immediately from the definition of $x$ in Definition~\ref{dih}, and the fact that $y \in xG$. For our first two cases, we take $s\in G$. If $u\in G$ then since $su=v$, we have that $v\in G$ as well. It then follows that
	$$s^{-1}\beta(u)=s^{-1}yu=ysu=yv=\beta(v).$$
	This means that $\beta(u)\sim\beta(v)$. Next, if $u\in xG$, then $v\in xG$. Since $G$ is abelian, we have that 
	$$s^{-1}\beta(u)=s^{-1}ycu=yscu=ycsu=ycv=\beta(v),$$
	so $\beta(u)\sim\beta(v)$ again. 
	
	For the third and fourth cases, we take $s\in xG$, so we can write $s=yg$ for some $g \in G$. Then by assumption $yg^{-1}c$ is also in $S$. If $u\in xG$, then $v\in G$. Since $G$ is abelian, we have that
	$$yg^{-1}c\beta(u)=yg^{-1}cycu=y(yc^{-1}g)cu=ysu=yv=\beta(v),$$
	so again we have that $\beta(u)\sim\beta(v)$. Finally, if $u\in G$, then $v\in xG$. Once again since $G$ is abelian we have that 
	$$yg^{-1}c\beta(u)=yg^{-1}cyu=yc(g^{-1}y)u=yc(yg)u=ycsu=ycv=\beta(v),$$
	so in all four cases, if $u\sim v$ then $\beta(u)\sim\beta(v)$, thus $\beta$ is an automorphism of $\Gamma$.
	
	To show that it is a regular subgroup, it suffices to show that it acts transitively on the vertices of $\Gamma$, since $\abs{H}=\abs{V(\Gamma)}$. Since acting on a fixed coset of $G$ we either have $\beta^2=\alpha_{c^{-1}}$ or $\beta^2=\alpha_c$, it should be clear that on either coset
	$$\ang{\alpha_a, \beta^2 \,:\,a\in A}\cong \ang{\alpha_g\,:\,g\in G}\cong G,$$ so $H$ is transitive on each coset of $G$. Finally, if $z\in G$ then $\beta(z)=xz\in xG$, and if $z\in xG$, then $\beta(z)=xcg\in G$, so $\beta$ interchanges the cosets of $G$. Thus $H$ is transitive, and thus regular, on $D=V(\Gamma)$.	
\end{proof}

We have shown that Cayley graphs on abelian groups of even order are also Cayley graphs on one or more corresponding dihedral groups, and we have given conditions for when a Cayley graph on $\dih{C_{2^k}\times A}$ is also a Cayley graph on $C_{2^{k+1}}\times A$.
It is unknown if the restriction on the connection set in Theorem~\ref{3} is necessary.
It is possible that the conditions given here exclude some graphs that are Cayley on both groups and that there exist less restrictive conditions that exclude less graphs.

An interesting observation came up in computational examination of regular subgroups of automorphism groups of Cayley graphs, that could be used as inspiration for future projects. A Cayley digraph was found that could be represented on both the quaternion group and on $C_8$. This is an example of the result from \cite{kovacs} not holding unless both groups are abelian. It would be interesting to study whether or not we can find conditions for Cayley digraphs of cyclic groups of the proper order to be Cayley digraphs on generalized dicyclic groups.

\end{document}